\theoremstyle{plain}
\newtheorem{theorem}{Theorem}
\newtheorem*{theorem*}{Theorem}
\newtheorem{question}[theorem]{Question}
\newtheorem{conjecture}[theorem]{Conjecture}
\newtheorem{proposition}[theorem]{Proposition}
\theoremstyle{definition}
\theoremstyle{remark}
\title{ On the Number of Shortest Paths in Graphs}
\author{Itai Benjamini, Elad Tzalik}
\begin{document}
\maketitle

\begin{abstract}
    It is proved that the number of shortest paths between two vertices of distance $t$ in a graph with degrees bounded by $\Delta$ is at most $2 \cdot (\frac{\Delta}{2})^t$. This improves upon the na\"ive $\Delta (\Delta-1) ^{t-1}$ bound.
\end{abstract}

Let $G$ be a multigraph with each degree less or equal $\Delta$, and let $t$ be the distance in $G$ between two vertices $x,y$. The aim of this note is to remark that the number of shortest path between $x$ and $y$ is at most exponential in $\frac{\Delta}{2}$, which improve upon the na\"ive $\Delta (\Delta-1) ^{t-1}$ bound.

For $x,y \in V(G)$, let $n_{G}(x,y)$ denote the number of shortest paths between $x$ and $y$ in $G$ and let $d_G(x,y)$ be the distance between $x$ and $y$ in $G$.

\begin{theorem}\label{thm:counting_shortest_paths}
    Let $G$ be a graph of bounded degree $\Delta$. And denote $d_G(x,y)$ by $t$, then $n_G(x,y)\leq \Delta 
 \cdot \left(\lfloor \frac{\Delta}{2} \rfloor \lceil \frac{\Delta}{2} \rceil \right)^{(t-1)/2}$.
\end{theorem}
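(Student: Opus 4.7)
The plan is to pass to the DAG $D$ of shortest $x$--$y$ paths and run a short entropy argument on a uniformly random shortest path, exploiting the inequality $d^-(v)\,d^+(v) \le \lfloor \Delta/2\rfloor \lceil \Delta/2\rceil$ at every internal vertex of $D$.

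First, I would take the BFS layers $L_i = \{v : d_G(x,v) = i\}$ and let $D$ be the DAG on the vertices $v$ satisfying $d_G(x,v) + d_G(v,y) = t$, keeping only the edges that run between consecutive layers. For $v \in D$ at layer $i$, write $d^-(v) = |N(v) \cap L_{i-1}|$ and $d^+(v) = |N(v) \cap L_{i+1}|$. These are disjoint sets of neighbors of $v$ in $G$, so $d^-(v) + d^+(v) \le \Delta$; and for every internal vertex (any $v \ne x,y$) both of them are at least $1$, because $v$ lies on some shortest path. The integer AM--GM inequality then forces $d^-(v)\,d^+(v) \le A$, where I abbreviate $A := \lfloor \Delta/2\rfloor \lceil \Delta/2\rceil$.

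Next, let $P = (x = v_0, v_1, \ldots, v_t = y)$ be uniformly distributed over shortest $x$--$y$ paths in $D$, so that the Shannon entropy satisfies $H(P) = \log n_G(x,y)$. Expanding by the chain rule in the forward order and using that $v_i$ must be a forward neighbor of $v_{i-1}$ in $D$, one gets $H(v_i \mid v_{<i}) \le H(v_i \mid v_{i-1}) \le \EE \log d^+(v_{i-1})$; summing yields $H(P) \le \EE \sum_{i=0}^{t-1} \log d^+(v_i)$. The chain rule in the reverse order analogously gives $H(P) \le \EE \sum_{i=1}^{t} \log d^-(v_i)$.

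Finally, adding the two bounds, the right-hand sides combine into
\[
2 H(P) \;\le\; \EE\!\left[\log d^+(v_0) + \log d^-(v_t) + \sum_{i=1}^{t-1} \log\bigl(d^+(v_i)\,d^-(v_i)\bigr)\right] \;\le\; 2 \log \Delta + (t-1) \log A,
\]
by using $d^+(x), d^-(y) \le \Delta$ at the endpoints and the product inequality at each internal vertex. Dividing by $2$ and exponentiating gives $n_G(x,y) \le \Delta \cdot A^{(t-1)/2}$, as claimed. The only conceptually nontrivial step is the pairing of forward and backward expansions: either chain-rule expansion alone only produces the trivial $(\Delta-1)^{t-1}$-type bound, and it is precisely the averaging that converts $\log d^+ + \log d^-$ into a single $\log A$ per internal vertex.
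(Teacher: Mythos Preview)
Your argument is correct and is essentially the same as the paper's: both pick a uniformly random shortest path, apply the chain rule once in each direction, add the two expansions, and bound each internal term via $d^-(v)+d^+(v)\le\Delta$ together with integer AM--GM. One small caveat: the paper states the result for multigraphs (and the tight example is a multigraph), so it records the path as $(X_0,E_1,X_1,\ldots,E_t,X_t)$ and bounds $H(E_i\mid X_{i-1})$ by $\log$ of an \emph{edge} count; your vertex-only encoding $P=(v_0,\ldots,v_t)$ and the definition $d^+(v)=|N(v)\cap L_{i+1}|$ would, as written, only bound the number of distinct vertex-sequences, which can be strictly smaller than $n_G(x,y)$ when parallel edges are present. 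Replacing $d^\pm$ by edge counts into the adjacent layers (or, equivalently, recording the edge $e_i$ at each step) fixes this immediately and leaves the rest of your proof unchanged.
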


Our proof is based on analyzing the entropy of a uniformly random shortest $x$ - $y$ path and resembles the entropy proof of the irregular Moore bound by Babu and Radhakrishnan \cite{Babu2010AnEB} \footnote{See \cite{rao_yehudayoff_2020} for a nice presentation of the proof.}. 

\section{Proof of \Cref{thm:counting_shortest_paths}}

    Let $\mathcal{P}$ be a uniformly random $x$-$y$ shortest path. Since the entropy of $\mathcal{P}$, $H(\mathcal{P})$ equals $\log_2(n_G(x,y))$ ( from now on all logarithms are assumed to be with base $2$), it is enough to show that:
    
    \[ H(\mathcal{P}) \leq \log \left( \Delta 
 \cdot \left(\lfloor \frac{\Delta}{2} \rfloor \lceil \frac{\Delta}{2} \rceil \right)^{(t-1)/2} \right).\]

    Without loss of generality assume every edge and vertex in $G$ belong to some shortest path between $x$ and $y$. 
    We can write $\mathcal{P}$ as a vector of the vertices and edges along the path, i.e. 
    
    \[\mathcal{P} = (X_0,E_1,X_1,E_2,\ldots,E_t,X_t).\] Where $X_0=x,X_t=y$, $X_i$ is the $i^{th}$ vertex along the shortest path and $E_i$ is the $i^{th}$ edge on the path. For every vertex $v \in V$ define $N_x(v)=\{e \in E \mid v \in e \text{ and } dist_G(x,e)< d_G(x,v)\}$, and $deg_x(v)= |N_x(v)|$. Similarly define $N_y(v)$ and $deg_y(v)$.

    Notice that the $i^{th}$ edge determines the vertices in location $i-1,i$ hence  $H(X_{i-1} \mid E_i)=0$ and $H(X_{i} \mid E_{i})=0$. Therefore by applying the chain rule (combined with $H(X_0)=0$), we get:
    \begin{equation}\label{eq:chain_rule}
        H(X_0,E_1,X_1\ldots,E_t,X_t) = \sum_{i=1}^t H(E_i \mid X_{i-1},E_{i-1}\ldots,E_1, X_0).
    \end{equation}

    We can also apply the chain rule in the reverse direction (now combining with $H(X_t)=0$) and get:

    \begin{equation}\label{eq:chain_rule_reversed}
        H(X_0,E_1,X_1\ldots,E_t,X_t) = \sum_{i=1}^t H(E_i \mid X_{i+1},E_{i+1}\ldots,E_t, X_t).
    \end{equation}
    
    Observe that since $\mathcal{P}$ is a \emph{shortest} path, the random variable $E_i$ conditioned on the values of $X_{i-1},E_{i-1}\ldots,E_1, X_0$ is identically distributed as the random variable obtained by conditioning $E_i$ only on the value of $X_{i-1}$. 
    This means that $H(E_i \mid X_{i-1},E_{i-1}\ldots,E_1, X_0) = H(E_i \mid X_{i-1})$ and by symmetry $H(E_i \mid X_{i},E_{i+1}\ldots,E_t, X_t) = H(E_i \mid X_{i})$. 
    By plugging this into \Cref{eq:chain_rule,eq:chain_rule_reversed} and adding them up we get:

    \begin{align}\label{eq:chain_rule_both_directions}
    2H(\mathcal{P}) &= \sum_{i=1}^t \left( H(E_i \mid X_i)+H(E_i \mid X_{i-1}) \right) \\  
    &= \left(H(E_1 \mid X_0) + H(E_t \mid X_t) \right) + \sum_{i=1}^{t-1} H(E_i \mid X_i) + H(E_{i-1} \mid X_i).
    \end{align}

    We now bound each term in the last expression.
    For the first term, we note that $H(E_1 \mid X_0)=H(E_1) \leq \log(\Delta)$ since $X_0=x$ and $E_1$ is supported on at most $\Delta$ elements. Similarly $H(E_t \mid X_t) \leq \log(\Delta)$.

    For the sum, notice that by definition $H(E_{i+1} \mid X_i) = \sum_{v\in V} \Pr[X_i=v] \cdot H(E_{i+1} \mid X_i=v)$. Observe that since $\mathcal{P}$ is a shortest path, given that $X_{i}=v$ the support of the edge $E_{i+1}$ is contained in $N_y(v)$ and the support of $E_i$ is contained in $N_x(v)$. Hence:

    \begin{align}\label{eq:expanding_terms_in_entropy_sum}
        H(E_{i+1} \mid X_i) + H(E_i \mid X_i) &= \sum_{v} \Pr[X_i=v] \cdot 
 \left( H(E_{i+1} \mid X_i=v) + H(E_i \mid X_i=v) \right) \\  
 &\leq \sum_{v} \Pr[X_i=v] \cdot 
 \left( \log( deg_x(v) ) + \log (deg_y(v)) \right) \\  
 &\leq  \sum_{v} \Pr[X_i=v] \cdot 
 \log( deg_x(v) deg_y(v)).
    \end{align}
    
    Note that $N_x(v),N_y(v)$ are disjoint as an edge in the intersection will imply that there is a vertex appearing twice along a shortest path which is absurd. Hence since the graph is of degree bounded by $\Delta$ we have that $deg_x(v)+deg_y(v)\leq \Delta$, hence:

    \begin{equation}\label{eq:log_degree_bound}
        \log( deg_x(v) deg_y(v)) \leq \log \left( \lfloor \frac{\Delta}{2} \rfloor \cdot \lceil \frac{\Delta}{2} \rceil \right).
    \end{equation} 

    Plugging \Cref{eq:log_degree_bound} into \Cref{eq:expanding_terms_in_entropy_sum} we get
    
    \[H(E_{i+1} \mid X_i) + H(E_i \mid X_i) \leq \log \left( \lfloor \frac{\Delta}{2} \rfloor \cdot \lceil \frac{\Delta}{2} \rceil \right).\]

    Finally, by plugging everything into \Cref{eq:chain_rule_both_directions}:

    \[ 2\log(n_G(x,y)) = 2H(\mathcal{P})\leq 2 \log(\Delta) +(t-1) \log \left( \lfloor \frac{\Delta}{2} \rfloor \cdot \lceil \frac{\Delta}{2} \rceil \right). \]

    Hence $H(\mathcal{P}) \leq \log \left( \Delta 
 \cdot \left(\lfloor \frac{\Delta}{2} \rfloor \lceil \frac{\Delta}{2} \rceil \right)^{(t-1)/2} \right)$ as needed.

\section{Comments}

\paragraph{A tight example}
    Consider the cycle graph on $2t$ nodes, $C_{2t}$ and replace each edge by  $ \lfloor \frac{\Delta}{2} \rfloor$ multiple edges or  $ \lceil \frac{\Delta}{2} \rceil$ multiple edges in an alternating fashion. Denote this graph by $C_{2t,\Delta}$. This multigraph achieves the bound of \Cref{thm:counting_shortest_paths} when $\Delta$ is even, as well as when $\Delta$ and $t$ are odd.

We believe that the bound in \Cref{thm:counting_shortest_paths} is not optimal in case $\Delta$ is odd and $t$ is even, and can be strengthened as follows:

\begin{conjecture}
    In the case of odd $\Delta$ and even $t$ the number of shortest paths agree with the graph $C_{2t,\Delta}$. Explicitly, the number of shortest paths is bounded by $2 \left(\lfloor \frac{\Delta}{2} \rfloor \lceil \frac{\Delta}{2} \rceil \right)^{t/2}$ .
\end{conjecture}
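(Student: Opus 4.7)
My plan is to sharpen the entropy argument of \Cref{thm:counting_shortest_paths}. Writing $M=\lfloor\Delta/2\rfloor\lceil\Delta/2\rceil$, that proof actually yields the slightly sharper inequality
\[
2H(\mathcal P)\leq \log\bigl(\deg_y(x)\deg_x(y)\bigr)+(t-1)\log M,
\]
since $E_1$ is supported on $N_y(x)$ and $E_t$ on $N_x(y)$. The conjecture is equivalent to $2H(\mathcal P)\leq \log(4M)+(t-1)\log M$, so I need to save a further $\log\bigl(\deg_y(x)\deg_x(y)/(4M)\bigr)$ bits whenever this quantity is positive.

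I would split into cases on the endpoint degrees. If $\deg_y(x)\leq\Delta-1$ or $\deg_x(y)\leq\Delta-1$, then $\deg_y(x)\deg_x(y)\leq\Delta(\Delta-1)\leq\Delta^2-1=4M$ (using odd $\Delta$), and the sharpened bound already yields the desired inequality. Only the case $\deg_y(x)=\deg_x(y)=\Delta$ resists, and then the entropy estimate overshoots by exactly $\log(\Delta^2/(\Delta^2-1))=\Theta(\Delta^{-2})$ bits. Here I would exploit the non-uniformity of the random first and last edges: $\Pr[E_1=e]$ for $e=\{x,v\}$ is proportional to the continuation count $n_G(v,y)$, so $E_1$ is uniform on the $\Delta$ multi-edges at $x$ only when all on-path neighbors of $x$ share the same $n_G(v,y)$. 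The target is to establish the trade-off
\[
H(E_1)+H(E_t)-\log(4M)\;\leq\; (t-1)\log M-\sum_{i=1}^{t-1}\bigl(H(E_i\mid X_i)+H(E_{i+1}\mid X_i)\bigr),
\]
so that the excess entropy at the endpoints is covered by the slack at internal vertices; one checks that $C_{2t,\Delta}$ makes both sides vanish. The intuition is that the only structure saturating every internal bound $\deg_x(v)\deg_y(v)\leq M$ simultaneously is a layered graph with the $\{\lfloor\Delta/2\rfloor,\lceil\Delta/2\rceil\}$-split at $x$, which forces $n_G(v,y)$ to depend on $v$ and drags $H(E_1)$ strictly below $\log\Delta$.

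As a backup I would try induction on $t$. The base case $t=2$ reduces to maximizing $\sum_v m(x,v)m(v,y)$ subject to $m(x,v)+m(v,y)\leq\Delta$ and $\sum_v m(x,v),\sum_v m(v,y)\leq\Delta$; its integer maximum equals $2M$ and is attained only at the two-neighbor configuration that mirrors $C_{4,\Delta}$. The inductive step is less clean because peeling two edges off either end does not preserve the hypothesis $\deg_y(x')=\deg_x(y')=\Delta$ at the new endpoints.

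The principal obstacle is the minuscule size of the deficit, $\log(\Delta^2/(\Delta^2-1))=\Theta(\Delta^{-2})$, which is too small for crude concavity, sub-additivity or data-processing inequalities to capture; any successful estimate must catch exactly the right amount. A full proof will likely require either a structural classification of near-extremal configurations, showing they must locally look like $C_{2t,\Delta}$, or a dedicated identity coupling endpoint non-uniformity to internal saturation. Closing this tiny gap, rather than finding a single sweeping estimate, is what makes the conjecture genuinely delicate.
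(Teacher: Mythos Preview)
The paper does \emph{not} prove this statement: it is explicitly labelled a conjecture and left open. There is therefore no ``paper's own proof'' to compare your proposal against.

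Your write-up is not a proof either, and to your credit you say so. What you have is a sound preliminary reduction. The sharpening
\[
2H(\mathcal P)\le \log\bigl(\deg_y(x)\deg_x(y)\bigr)+(t-1)\log M
\]
is correct and follows immediately from the paper's argument, and your case split is valid: when $\deg_y(x)\le\Delta-1$ or $\deg_x(y)\le\Delta-1$ one indeed has $\deg_y(x)\deg_x(y)\le\Delta(\Delta-1)\le\Delta^2-1=4M$, which finishes those cases. Your computation for $C_{2t,\Delta}$ is also right: there $H(E_1)+H(E_t)=\log(4M)$ exactly, with all internal terms saturated, so the trade-off inequality you formulate holds with equality on the conjectured extremizer.

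The genuine gap is precisely where you locate it: the case $\deg_y(x)=\deg_x(y)=\Delta$. You state the trade-off you would need but do not prove it, and the heuristic you offer (``the only structure saturating every internal bound is a layered graph with the $\{\lfloor\Delta/2\rfloor,\lceil\Delta/2\rceil\}$-split at $x$, which forces $H(E_1)<\log\Delta$'') is not an argument. Even granting that every internal vertex on the shortest-path DAG has the split $\{\lfloor\Delta/2\rfloor,\lceil\Delta/2\rceil\}$, nothing you wrote rules out a highly symmetric layered graph in which all first-layer vertices have identical continuation counts, making $E_1$ genuinely uniform; you would then need slack to appear elsewhere, and you have not shown that it does. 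Your inductive backup has the same problem: you verify the base $t=2$ but, as you note, peeling layers destroys the endpoint hypothesis, so the induction does not close. In short, your proposal correctly isolates the $\Theta(\Delta^{-2})$-bit deficit as the whole difficulty, but offers no mechanism to recover it; the statement remains, for you as for the authors, a conjecture.
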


\paragraph{Random walk on graphs} Let $G$ be a weighted graph and consider a random walk on $G$. By approximating the weights with rationals in $\frac{1}{\Delta} \mathbb{Z}$ we can approximate the walk on this graph by a walk on a multigraph $G'$. For two vertices $x,y$, let $t$ be the minimal integer for which a walk according to $G'$ starting at $x$ reaches $y$ in $t$ steps, then in such case the probability to reach $y$ from $x$ is $\frac{n_{G'}(x,y)}{\Delta^t} \leq (\frac{1}{2})^{t-1}$. By letting $\Delta$ go to $0$ we have:

\[ \Pr[ \text{A random walk from } x \text{ to } y \text{ takes } t \text{ steps} ] \leq \left(\frac{1}{2}\right)^{t-1}\]

\paragraph{Simple graphs} Note that a slightly stronger bound applies to simple graphs. In case $t = 1$ we have $n_G(x,y)\leq 1$ since simple graphs have no parallel edges. In case $t=2$ one has $n_G(x,y)\leq \Delta$ since again, the collection of $2$-paths from $x$ to $y$ must have disjoint midpoints by simplicity. For $t \geq 3$, one gets a bound sharper by $\approx \frac{\Delta}{4}$ factor. This is obtained by observing that $H(E_1 \mid X_1)=0$ (there's a unique edge going from $X_1$ to $X_0=x$ by simplicity), hence $H(E_1 \mid X_1)+ H(E_2 \mid X_1) \leq \log(\Delta-1)$. Applying this improved bound to the first and last terms of the sum \Cref{eq:chain_rule_both_directions} gives: $n_G(x,y) \leq \Delta (\Delta-1) \cdot \left(\lfloor \frac{\Delta}{2} \rfloor \lceil \frac{\Delta}{2} \rceil \right)^{(t-3)/2}$. 

One obtains a simple graphs achieving the above bound by again taking the cycle on $2t$ vertices, and replacing each vertex by $\frac{\Delta}{2}$ vertices, and replacing each edge by a bi-clique.
\footnote{In case $\Delta$ is odd, replace each vertex by $ \lceil \frac{\Delta}{2} \rceil$ vertices and each edge by either a bi-clique or a biregular graph of degree $ \lfloor \frac{\Delta}{2} \rfloor$ in an alternating fashion.}
Let $G$ be the aforementioned graph, then the number of shortest paths between two vertices $u,v$ that correspond to antipodal points in $C_{2t}$ is $\left(\frac{\Delta}{2} \right)^{t-1}$.

\begin{question}
    What is the exact maximum number $n_G(x,y)$ among all simple graphs in terms of the bound $\Delta$ on the degree, and the distance $t$ between $x$ and $y$.
\end{question}

% \begin{remark}[Simple graphs]
%     When the graph $G$ is simple one gets a bound sharper by $\approx \frac{\Delta}{4}$ factor by observing that $H(E_1 \mid X_1)=0$ (there's a unique edge going from $X_1$ to $X_0=x$ by simplicity), and similarly $H(E_{t} \mid X_{t-1})=0$. The resulting bound is $n_G(x,y) \leq \Delta (\Delta-1) \cdot \left(\lfloor \frac{\Delta}{2} \rfloor \lceil \frac{\Delta}{2} \rceil \right)^{(t-3)/2}$. 

%     There are graphs achieving $\Omega \left(\left(\lfloor \frac{\Delta}{2} \rfloor \lceil \frac{\Delta}{2} \rceil \right)^{(t-3)/2} \right)$.
%     For ease of notation, we assume $\Delta$ is even. Consider the graph $G$ obtained by augmenting the cycle graph on $2t$ vertices in the following way: (1) each vertex $v \in C_{2t}$ is replaced by $\Delta$ vertices $v_1,\ldots v_{\Delta}$. (2) The vertices $u_i,v_j$ are connected in $G$ if and only if $\{u,v\} \in E(C_{2t})$ and $i+k=j$ mod $\Delta$ for some $k \in \{0,\ldots, \frac{\Delta}{2}-1\}$. 
% \end{remark}
\paragraph{Large girth}   It is plausible to suspect that assuming large girth one may get a much better upper bound. While large girth does imply an improved bound, it improves the exponent, but not the base of the exponent. If a graph $G$ has girth strictly bigger than $g$ then one can shave off the exponent by roughly $\Theta(g)$ (additive). This follows similarly to the analysis in the case of simple graphs, by noticing that each vertex $v$ in the $\frac{g}{2}$ neighborhood around $x$ has $deg_x(v)=1$ (otherwise a cycle is formed by $v$, two elements in $N_x(v)$ and the paths from them towards $x$) hence the entropy bounds in $\Cref{eq:log_degree_bound}$ are sharper for the edges lying in the $\frac{g}{2}$ neighborhoods of $x,y$. A construction of a graph with $(\frac{\Delta}{2})^{t-\Theta(g)}$ can be made similarly to the simple graph construction: Instead of replacing each edge of a cycle of length $2t$ by a bi-clique with $\frac{\Delta}{2}$ vertices on each side one replaces each edge by a bi-regular graph of degree $\frac{\Delta}{2}$ and large girth.

There are some natural examples of graph families where one may get an improvement in the base of the exponent beyond $\frac{\Delta}{2}$. E.g consider the family of triangulation of the two dimensional sphere. We claim that in such a case one has an improved bound:

\begin{proposition}
    Let $G$ be a triangulation of the two dimensional sphere with bounded degree $\Delta$, and let $x,y$ be two vertices of distance  $t>1$, then $n_G(x,y)\leq \Delta 
 \cdot \left(\lfloor \frac{\Delta-2}{2} \rfloor \lceil \frac{\Delta-2}{2} \rceil \right)^{(t-1)/2}$.
\end{proposition}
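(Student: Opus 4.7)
The plan is to rerun the entropy argument of \Cref{thm:counting_shortest_paths} verbatim, replacing only the key degree bound $\deg_x(v)+\deg_y(v)\leq \Delta$ of \Cref{eq:log_degree_bound} with the sharper
\[
\deg_x(v)+\deg_y(v)\leq \Delta-2
\]
for every interior vertex $v=X_i$, $1\leq i \leq t-1$, of the shortest path. Once this strengthening is in place, the rest of the chain of inequalities goes through unchanged and produces
\[
2H(\mathcal{P})\leq 2\log\Delta+(t-1)\log\!\left(\lfloor (\Delta-2)/2\rfloor \lceil (\Delta-2)/2\rceil\right),
\]
which is exactly the claimed bound.

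The whole content of the argument is thus this improved structural inequality. I would exploit the key topological fact about a triangulation of $S^2$: the link of every vertex $v$ is a simple cycle $C_v$ of length $\deg(v)$ whose edges are edges of $G$. In particular, two neighbors of $v$ that are consecutive on $C_v$ are joined by an edge of $G$, so their distances from $x$ differ by at most $1$. Writing $d:=d_G(x,v)$, one checks directly that $u\in N_x(v)$ forces $d_G(x,u)=d-1$, while $u\in N_y(v)$ forces $d_G(x,u)=d+1$ (using the triangle inequality $d_G(x,u)+d_G(y,u)\geq t$ together with $u\sim v$). Hence a vertex of $N_x(v)$ is never adjacent on $C_v$ to a vertex of $N_y(v)$.

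Since $v$ is an interior vertex of the shortest path, both $N_x(v)$ and $N_y(v)$ are non-empty. Partition the vertices of $C_v$ into three classes: $N_x(v)$, $N_y(v)$, and the rest (call it the \emph{neutral} class). By the non-adjacency observation, any two maximal arcs of $C_v$ whose classes are $N_x(v)$ and $N_y(v)$ must be separated by at least one neutral vertex. Because both non-neutral classes are non-empty, a cyclic traversal of $C_v$ visits at least two non-neutral arcs of different classes, separated cyclically by at least two neutral arcs; hence $C_v$ contains at least two neutral vertices. This yields $\deg_x(v)+\deg_y(v)\leq \deg(v)-2\leq \Delta-2$. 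Combining with $ab\leq \lfloor (a+b)/2\rfloor\lceil (a+b)/2\rceil$ and substituting into the entropy argument completes the proof. The only genuinely new step is this cyclic transition argument on $C_v$, which is the main (and really only) place where the triangulation-of-$S^2$ hypothesis enters; the remainder is a direct transcription of the proof of \Cref{thm:counting_shortest_paths}.
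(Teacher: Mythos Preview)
Your proposal is correct, and at the high level it follows the paper's own scheme: reduce everything to the single structural inequality $\deg_x(v)+\deg_y(v)\leq \Delta-2$ for interior vertices, then feed this into the entropy computation of \Cref{thm:counting_shortest_paths} unchanged.

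Where you differ from the paper is in how you establish that inequality. The paper argues via the link of $x$: since the neighbors of $x$ form a cycle, any $v$ at distance~$1$ from $x$ has two neighbors also at distance~$1$, and these are neither in $N_x(v)$ nor in $N_y(v)$; the general case is then obtained by contracting the closed neighborhood of $x$ and inducting on the distance layer. You instead look directly at the link $C_v$ of $v$, observe that vertices of $N_x(v)$ and $N_y(v)$ sit at $x$-distances $d-1$ and $d+1$ respectively, and hence cannot be consecutive on $C_v$; the cyclic transition argument then forces at least two neutral vertices. Your route is arguably cleaner: it treats all interior vertices uniformly in one shot and sidesteps the contraction/induction step (which, as written in the paper, requires checking that the contracted complex is still a triangulation of $S^2$). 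The paper's route, on the other hand, makes the layer-by-layer picture more explicit. Both use exactly the same topological input (links are cycles) and reach the same endpoint.
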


\begin{proof}
    We claim that $\forall v \neq x,y$ we have $deg_x(v)+deg_y(v) \leq \Delta-2$. Notice that combining this with the bound in \Cref{thm:counting_shortest_paths} yields the claim. 
    
    Let $N$ be the set of neighbors of $x$, and assume $v \in N$. It is well known (e.g. can be proved by excision) that the graph induced on $N$ is a cycle (by excision it has the homology of the cycle and hence it is a cycle), therefore $v$ has two edges to neighbors in $N$ which are not in $N_x(v) \cup N_y(v)$, hence $\mid N_x(v) \cup N_y(v) \mid \leq \Delta-2$ as needed. By contracting $N$ towards $x$ the same inequality holds for the $2$-neighbourhoods of $x$, and by induction for all $v \neq x,y$ that lie along a path from $x$ to $y$.
\end{proof}

Finally we believe it is interesting to prove higher dimensional analogues of \Cref{thm:counting_shortest_paths}. Let $X$ be a $d$-dimensional simplicial complex such that each $d-1$ face is contained in at most $\Delta$ different $d$-faces. A cycle $c \in Z_{d-1}(X, \mathbb{F}_2)$ is  \emph{irreducible} iff whenever $c = c_1+c_2$ with $c_1$, $c_2$ of disjoint support then either $c_1=0$ or $c_2=0$. The minimum filling size of $c$ is defined to be $m(c) = \min \{ \mid supp(f) \mid :  f \in C_d(X,\mathbb{F}_2) , \partial f = c\}$ and each element achieving the minimum is said to be a minimal filling of $c$  \footnote{For related definitions see \cite{MeshulamW09}} :

\begin{conjecture}
     There exist a constant $c(d)<1$ such that there are at most  $O \left( (c(d) \Delta)^{m(c)} \right)$ minimal fillings of $c$.
\end{conjecture}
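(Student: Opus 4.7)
The plan is to mimic the entropy argument from the proof of \Cref{thm:counting_shortest_paths}. Let $F$ be a uniformly random minimal filling of $c$, so that the number of minimal fillings equals $2^{H(F)}$; it suffices to show $H(F) \leq m(c) \log(c(d)\Delta) + O_d(1)$ for a constant $c(d) < 1$. The first step is to encode $F$ as a sequence $(\tau_1, \sigma_1, \tau_2, \sigma_2, \ldots, \tau_m)$ of the $d$-faces of $F$ alternating with shared $(d-1)$-faces, generalizing the alternating vertex-edge encoding used in the theorem. Irreducibility of $c$ should play the role that connectedness plays in the path case: it should force the dual graph of $F$ (nodes are the $d$-faces of $F$, with an edge whenever two share a $(d-1)$-face) to be connected, enabling such a traversal. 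This probably needs a small lemma — if the dual graph split into two components, the boundaries of the two parts would give an irreducible decomposition of $c$.

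Next, I would apply the chain rule to $H(F)$ along this ordering, obtaining $\sum_i H(\tau_i \mid \tau_1,\sigma_1,\ldots,\sigma_{i-1})$, then apply the chain rule in the reverse direction and add, as in \Cref{eq:chain_rule,eq:chain_rule_reversed}. The \emph{shortest-path} property used in the theorem — that $E_i$ conditioned on the full prefix is determined by $X_{i-1}$ alone — would be replaced by a minimality argument: conditioned on the previous $d$-faces only through the current $(d-1)$-face $\sigma_i$ together with the residual cycle $c - \partial(\tau_1 + \cdots + \tau_{i-1})$, the next face is determined. The goal is to bound each conditional entropy by the log of a local count of admissible cofaces of $\sigma_i$, and then bound the sum of the two directional entropies by $\log(c(d)\Delta)^2$.

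The crux, and the main obstacle, is this local bound. In the theorem, for each vertex $v$ the incident edges split into the disjoint sets $N_x(v)$ and $N_y(v)$, and the factor $\tfrac{1}{2}$ comes from $deg_x(v) + deg_y(v) \le \Delta$ combined with AM-GM. For a minimal filling of a $(d-1)$-cycle the analog should be a local partition of the $d$-cofaces of a $(d-1)$-face $\sigma$ into \emph{forward} and \emph{backward} cofaces relative to the bidirectional traversal, together with a disjointness property coming from minimality: if a coface appeared on both sides, excising a portion of $F$ should produce a strictly smaller chain with the same boundary, contradicting minimality by a homological cancellation argument. Making this \textquote{double use implies non-minimality} principle precise in general dimension seems delicate, because removing one $d$-face from $F$ typically forces adding others to keep $\partial F = c$, and one must argue that the net effect strictly decreases $|\mathrm{supp}(F)|$.

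If the direct analog proves too rigid, a backup is to pursue a weaker local bound that still yields some $c(d) < 1$: for instance, one could attempt to show that only a constant fraction of the cofaces of $\sigma$ can appear as \textquote{admissible in both directions} simultaneously, via a counting or linking-type argument in the link of $\sigma$ (in the spirit of the sphere-triangulation \Cref{thm:counting_shortest_paths}'s improved bound, where local topology of the link cuts $\Delta$ down to $\Delta-2$). Since the conjecture only demands $c(d) < 1$, even a factor of $(1-\varepsilon_d)$ would suffice, and this relaxation seems considerably more tractable than obtaining the optimal $c(d) = \tfrac{1}{2}$.
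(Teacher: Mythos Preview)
This statement is a \emph{conjecture} in the paper: the authors pose it as an open problem and give no proof, so there is nothing in the paper to compare your attempt against. What you have written is not a proof but a heuristic outline of how the one-dimensional entropy argument might be lifted, and several of the steps you flag as ``probably needing a small lemma'' are in fact the whole difficulty.

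The most serious gap is the Markov-type step. In the path case the equality $H(E_i \mid X_{i-1},E_{i-1},\ldots,X_0)=H(E_i \mid X_{i-1})$ holds because the set of shortest $x$--$y$ paths through a fixed vertex $v$ factors as (shortest $x$--$v$ paths)$\times$(shortest $v$--$y$ paths), so under the uniform measure past and future are conditionally independent given the present vertex. There is no such factorization for minimal fillings: once $\tau_1,\ldots,\tau_{i-1}$ are laid down, the residual boundary $c+\partial(\tau_1+\cdots+\tau_{i-1})$ is a global object, not a single $(d-1)$-face, and the remaining faces must fill \emph{all} of it minimally; conditioning on ``$\sigma_i$ together with the residual cycle'' is conditioning on essentially the entire history, so nothing has been gained. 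Relatedly, your sequential encoding $(\tau_1,\sigma_1,\tau_2,\ldots)$ requires choosing a traversal of the dual graph of $F$; any such choice is made \emph{after} seeing $F$, and a spanning-tree-type walk lacks the two-directional symmetry that the averaging trick in \Cref{eq:chain_rule_both_directions} exploits. Finally, the proposed ``forward/backward'' split of the cofaces of a $(d-1)$-face has no intrinsic meaning for $d\ge 2$: in the path case it is induced by the scalar distance to $x$, but a $(d-1)$-cycle $c$ carries no analogous potential, so there is no canonical analogue of $N_x(v)$ and $N_y(v)$, and hence no evident source for the disjointness that produces the factor $\tfrac12$. Your backup of aiming only for some $c(d)<1$ via a link argument is a reasonable research direction, but as stated it is a hope rather than an argument.
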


Consider $\mathbb{Z}^3$ with the cubical structure and give each $2$-face $\ell$ a weight $X_{\ell}$ which is a Bernulli with probability $p$, independently. 
\begin{question}
    What is the typical number of minimal fillings of the square of side length $n$? Show that it is exponential in $n$.
\end{question}
For a one dimension analogue see \cite{FirstPassage}.

\paragraph{Acknowledgments} We thank Asaf Petruschka for finding a mistake in a previous version. I.B thanks the Israel Science Foundation for their support.  

\bibliographystyle{alphaurl}
\bibliography{References.bib}

\end{document}